\newcommand{\tq}{\, \big| \, }
\newtheorem{theorem}{\rm\bf Theorem}
\newtheorem{proposition}{\rm\bf Proposition}[section]
\newtheorem{lemma}[proposition]{\rm\bf Lemma}
\theoremstyle{definition}
\newtheorem{definition}[proposition]{\rm\bf Definition}
\theoremstyle{remark}
\newtheorem{remark}[proposition]{\rm\bf Remark}
\newtheorem{example}[proposition]{\rm\bf Example}
\def\interieur#1{\mathord{\mathop{\kern 0pt #1}\limits^\circ}}
\title [Harmonic symmetrization of convex sets]
{Harmonic symmetrization of convex sets and of Finsler structures, with applications to Hilbert geometry}
\author{Athanase Papadopoulos}
\address{A. Papadopoulos, Institut de Recherche Math{\'e}matique Avanc\'ee,
Universit{\'e} Louis Pasteur and CNRS,
7 rue Ren\'e Descartes,
 67084 Strasbourg Cedex - France} \email{papadopoulos@math.u-strasbg.fr}
\author{Marc Troyanov}
\address{M. Troyanov, Section de Math{\'e}matiques,  \'Ecole Polytechnique F{\'e}d\'erale de
Lausanne, 1015 Lausanne - Switzerland}
\email{marc.troyanov@epfl.ch}
\date{September 15, 2008.}
\begin{document}

\begin{abstract}  

David Hilbert discovered in 1895 an important metric that is canonically associated to an arbitrary convex domain $\Omega$ in the Euclidean (or projective) space. This metric is known to be Finslerian, and the usual proof of this fact assumes a certain degree of smoothness
of the boundary of $\Omega$, and refers to a theorem by Busemann and Mayer that produces the norm of a tangent vector from the distance function. In this paper, we develop a new approach for the study of the Hilbert metric where  no differentiability is assumed. The approach exhibits the Hilbert metric on a domain as a  symmetrization of a natural weak metric, known as the Funk metric. The Funk metric is described as a \emph{tautological} weak Finsler metric, in which the unit ball in each tangent space is naturally identified with the domain $\Omega$ itself.  The Hilbert metric is then identified with  the \emph{reversible tautological weak Finsler structure} on $\Omega$, and the unit ball of the Hilbert metric at each point is described as the \emph{harmonic} symmetrization of the unit ball of the Funk metric. Properties of the Hilbert metric then follow from general properties of harmonic symmetrizations of weak Finsler structures.
   
   \medskip
   
   \noindent AMS Mathematics Subject Classification: preliminary : 58B20 ; secondary : 51K05 ; 51K10 ; 52A07 ; 52A20 ; 53B40 ; 53C60.

\noindent Keywords: weak Finsler structure,  harmonic symmetrization, tautological Finsler structure,  Funk weak metric, Hilbert metric.
\end{abstract}

\maketitle
\tableofcontents
 
\section{Introduction}\label{intro}

The Hilbert metric is a canonical metric associated to an arbitrary bounded convex domain $\Omega \subset \mathbb{R}^n$. It has been proposed by David Hilbert in 1895  as an example of a metric for which the Euclidean straight lines are shortest geodesic curves. In the special case where $\Omega$ is the unit ball $\mathbb{B}^n  \subset \mathbb{R}^n$, this metric had been previously introduced by Felix Klein as a model of the hyperbolic (Lobachevski) space. The Hilbert metric has been very actively studied in recent years under  various  viewpoints by several authors, see in particular the papers by   Colbois, Verovic and Vernicos \cite{CV}, \cite{Vernicos2005}, \cite{CVV},  F\"ortsch,   Karlsson and  Noskov \cite{FK}, \cite{KN}, de la Harpe \cite{Harpe},   Benoist \cite{Benoist1}, \cite{Benoist2}, \cite{Benoist3}, \cite{Benoist4},  the thesis of Soci\'e-M\'ethou \cite{Socie1},\cite{Socie2}  and the book by Chern and Shen \cite{ChernShen}.

\medskip

To state things more precisely, we briefly recall the definition of the Hilbert metric. Consider two distinct points $x$ and $y$ in the bounded convex domain $\Omega$. The Euclidean line through $x$ and $y$ intersects the boundary of $\Omega$ at two points, which we denote by $a^+$ and $a^-$, in such a way that
$a^-,x,y,a^+$ are aligned in that order.  

\begin{figure}[htp]
\centering
\includegraphics[height=4cm]{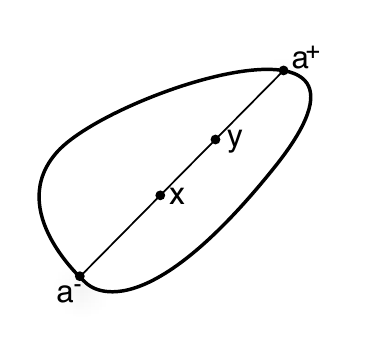}  
\caption{\small{The points $a^-,x,y,a^+$ used to define the Hilbert distance between $x$ and $y$.}}\label{fig1}
\end{figure}

The Hilbert metric $H$ is then defined by the formula
\begin{equation}\label{hilbert}
  H(x,y)= \frac{1}{2} \log\left( {\vert x-a^+\vert\over \vert y-a^+\vert}  {\vert y-a^-\vert\over \vert 
x-a^-\vert}\right).
\end{equation}

The following three basic facts are well known to people familiar with the Hilbert metric:
\begin{enumerate}[i)]
  \item The formula (\ref{hilbert})  is indeed a metric.
  \item This metric is Finslerian, provided the boundary of $\Omega$ is smooth enough.
  \item The metric is projective, that is, the Euclidean straight lines are geodesic.
\end{enumerate}
These facts are somewhat delicate to prove (the triangle inequality is not so simple to check, see e.g. \cite{Hilbert, Hilbert2}).  The main goal of the present paper is to give a new point of view on these facts and to provide simple proofs of them. We also extend the second property to any convex set, getting rid of any smoothness condition.

\medskip

To say that the Hilbert metric is \emph{Finslerian} means that the distance between two points is the infimum of the length of all (piecewise smooth) curves joining these two points, the length of a curve  $\gamma : [a,b] \to \Omega$ being defined as 
\begin{equation} \label{eq.lengthpv}
  \ell (\gamma) = \int_a^b p(\gamma(t),\dot\gamma(t)) dt.
\end{equation}

Here $p$ is a continuous function associated to the Finsler structure, which is defined on the tangent bundle of the domain $\Omega$, whose restriction to every fiber is a norm and which is smooth in the complement of the zero section. This function is called the \emph{Lagrangian} of the Finsler structure.

\medskip

The usual proof that the Hilbert metric of a smooth convex domain is Finslerian is quite involved. In this proof, one starts from the Busemann-Mayer Theorem \cite{BusemannMayer} which gives the Lagrangian of a Finsler structure as an infinitesimal version of the distance. In the case of the Hilbert metric, this theorem says that
$$
 p(x,\xi) = \lim_{t\to 0}\frac{H(x,\gamma(t))}{t},
$$
where $\gamma$ is any $C^1$ curve such that $\gamma(0) = x$ and $\dot\gamma(t)=\xi$. A  calculation gives then
$$
  p(x,\xi) = \frac{|\xi|}{2}\left(\frac{1}{|x-a|} + \frac{1}{|x-b|} \right),
$$
where $a$ and $b$ are the intersection points of the line $L$ through $x$ in direction $y$ with $\partial \Omega$. One then computes the length of a segment joining two points $x$ and $y$ using Formula (\ref{eq.lengthpv}), and one finds that this length is equal to $H(x,y)$. Finally, one proves that the length of any smooth curve joining $x$ to $y$  does not exceed $H(x,y)$. This is done by a delicate argument where the length of a smooth curve is approximated by that of a polygonal curve. The proof is sketched in \cite{Vernicos2005} and given with more details in \cite{Socie1}.

\bigskip

In the present paper, we approach the Hilbert metric from another point of view, in which this metric appears as a natural \emph{reversible  tautological} weak Finsler structure associated to the convex set $\Omega$.   In the spirit of our previous papers \cite{PT1,PT2}, we first deal with a simpler non-symmetric version of the Hilbert metric (called the Funk weak metric), which appears as the \emph{ tautological} weak Finsler structure on $\Omega$,  and we then symmetrize that metric.
Our approach has the advantage of making no smoothness assumptions, and no reference to the delicate Busemann-Mayer Theorem. As a side benefit, the proof of the triangle inequality of the Hilbert metric comes for free.

  The results of this paper can be considered as a continuation of a program that we started in \cite{PT1}, in which we investigate non-symmetric distances and their applications.

We would like to thank the referee for pointing out a number of inaccuracies and mistakes in the original manuscript.

\section{Weak metrics and their symmetrization}\label{sec:weak}

 \begin{definition}
 A \emph{weak metric} on a set $X$ is a function $\delta : X\times X\to [0,\infty]$ satisfying
 \begin{enumerate}
 \item $\delta(x,x)=0$ for all $x$ in $X$;
 \item $\delta(x,z)\leq \delta(x,y)+\delta(y,z)$ for all $x$, $y$ and $z$ in $X$.
 \end{enumerate}
 \end{definition}
 
The  weak metric $\delta$ is said to be  \emph{symmetric} if $\delta(x,y)=\delta(y,x)$ for all $x$ and $y$ in $X$, it is said to be \emph{finite} if $\delta(x,y)<\infty$ for every $x$ and $y$ in $X$, and it is said to be \emph{strongly separating} if we have the equivalence
\[\min(\delta(x,y),\delta(y,x))=0\iff x=y.\]
Finally, the weak metric $\delta$ is said to be \emph{weakly separating} if we have the equivalence
\[\max(\delta(x,y),\delta(y,x))=0\iff x=y.\]

The notion of weak metric goes back to the first half of the last century  (see e.g. \cite{Hausdorff}, in which Hausdorff defines asymmetric distances on various sets of subsets of a metric space). Asymmetric metrics were extensively studied by Busemann, cf. \cite{Busemann1942},  \cite{Busemann1944}, \cite{Busemann1955} \& \cite{Busemann1970}. 

A simple example of a weak metric is the   Minkowski weak metric discussed in the next section, and additional examples are given in the paper \cite{PT1}. An example that plays a fundamental role in the present paper is  the Funk weak metric, which is defined as follows:

\begin{definition}[The Funk weak metric]\label{def:Funk}
Let $\Omega$ be a nonempty open
convex subset of $\mathbb{R}^n$. The \emph{ Funk weak metric} of $\Omega$, denoted by $F=F_{\Omega}$,  is the weak metric defined, for $x$ and $y$ in $\Omega$, by the formula
\[
  \displaystyle F(x,y)=
\begin{cases} \displaystyle \log \frac{\vert x-a^+\vert}{\vert y-a^+\vert} & \text{ if } x\not= y \text{ and } R(x,y)\not\subset \Omega\\
0 & \text{ otherwise}.
\end{cases}
\]
\end{definition}
In this definition,   $R(x,y) \subset \mathbb{R}^n$ is the ray (i.e. the half-line) with origin $x$ and passing through the point $y$ and $a^+ = R(x,y)\cap \partial \Omega$.
The geometry of the Funk weak metric is discussed in \cite{PT2} and \cite{Zaustinsky}. Note that the classical proof 
of the triangle inequality for the Funk weak metric is based on a nonobvious geometric argument (see \cite{Zaustinsky}), but in our approach, we prove that the Funk weak metric is weak Finslerian and the triangle inequality comes for free. We shall come back 
on this at the end of the  section \ref{TWF}.

\medskip
 
 There are several ways to associate a symmetric weak metric to a given weak metric, and we shall use the symmetrization ${}^s\delta$ of $\delta$ defined by the formula
\begin{equation}\label{star}
{}^s\delta(x,y)=\frac{1}{2}\left(\delta(x,y)+\delta(y,x)\right)       
\end{equation}
for $x$ and $y$ in $X$. We shall call  ${}^s\delta$ the \emph{arithmetic symmetrization} of $\delta$.

Although we shall not use this fact in this paper, we note that there are other possible ways to symmetrize a given weak  metric. An example is the \emph{max symmetrization}, defined as 
\[M\delta(x,y)= \max\{\delta(x,y),\delta(y,x)\}\] 
for $x$ and $y$ in $X$.

\section{The Minkowski weak metric}\label{sec:Min}

For $n\geq 0$, let $\Omega\subset\mathbb{R}^n$ be a convex set such that $0\in\overline{\Omega}$ (the closure of $\Omega$),  and let $p:\mathbb{R}^n\to [0,\infty]$ be the function defined by

\[p(\xi)=\inf\{t>0\ \vert \ \frac{1}{t}\xi\in \Omega\}.\]

Note that if the ray $\mathbb{R}_+ \xi$ intersects the boundary $\partial\Omega$, say at a point $a$, then 
\[p(\xi)= \frac{|\xi|}{|a|},\]
otherwise $p(\xi) = 0$.
The function $p$ is called a \emph{Minkowski weak norm}. Minkowski weak norms (sometimes under different names) are studied in various books,  e.g. \cite{Eggleston},  \cite{Minkowski},  \cite{Thompson} and  \cite{Webster}.

The function $\delta:\mathbb{R}^n\times \mathbb{R}^n\to  [0,\infty]$ defined by 
\begin{equation}\label{eq:w}  
\delta(x,y)=p(y-x)
\end{equation}
is a weak metric on $\mathbb{R}^n$. We have the following  relations between 
the properties of the weak metric and the convex set $\Omega$:
\begin{enumerate}
\item $\delta$ is finite $\iff$ $0\in \interieur{\Omega}$ (the interior of $\Omega$);
\item if $\Omega=-\Omega$, then $\delta$ is symmetric;
\item $\delta$ is strongly separating $\iff$ $\Omega$ does not contain any Euclidean ray;
\item $\delta$ is weakly separating $\iff$ $\Omega$ does not contain any Euclidean line.
\end{enumerate}

We shall return to Minkowski weak metrics in \S\ref{s:sym} below, where we shall investigate their symmetrization. In particular, we shall construct, for each convex set $\Omega$, a symmetric convex set $\mathcal{H}(\Omega)$ whose associated weak Minkowski metric is the arithmetic symmetrization of the weak metric (\ref{eq:w}) associated to $\Omega$.

\section{Weak length spaces and their symmetrization}

Let $X$ be a topological space. We shall say that a collection $\Gamma$  of continuous paths $\gamma: [a,b]\to X$, where $[a,b]$ can be any compact interval of $\mathbb{R}$, is a \emph{semigroupoid of paths} on $X$ if  the following properties hold:
\begin{enumerate}
\item if $\gamma_1:[a,b]\to X$ and $\gamma_2:[c,d]\to X$ satisfy $\gamma_1(b)=\gamma_2(c)$, then the concatenation $\gamma_1 *\gamma_2$ is in $\Gamma$, 
\item any constant path belongs to $\Gamma$.
\end{enumerate}
 
 \smallskip
 
A typical example of a semigroupoid of paths is given by the set  of all  piecewise smooth paths in a smooth manifold.

\smallskip

\textbf{Remarks.}   In reference to the abstract notion of semigroupoid, it would not be necessary to assume that all constant paths belong to $\Gamma$, but this hypothesis is convenient and does not reduce the generality of our concepts. 

\medskip

We shall use the following notion:

\begin{definition}[weak length structure] 
Let $X$ be a topological space and let $\Gamma$ be a semigroupoid of paths on $X$.
A \emph{weak length structure} on $(X,\Gamma)$ is a function $\ell:\Gamma\to [0,\infty]$ 
such that the following two properties are satisfied:
\begin{enumerate}
\item (Additivity.) For every  $\gamma_1$ and $\gamma_2$ in $\Gamma$, we have $\ell(\gamma_1 *\gamma_2)=\ell(\gamma_1) + \ell(\gamma_2)$.
\item For any constant path $c$, we have $\ell(c)=0$.
\item (Invariance under reparametrization.) If $[a,b]$ and $[c,d]$ are intervals of $\mathbb{R}$, if $\gamma:[a,b]\to X$ is a path in $X$ which is in $\Gamma$ and if $f:[c,d]\to [a,b]$ is a continuous surjective nondecreasing map such that $\gamma\circ f$ is in $\Gamma$, then $\ell(\gamma)=\ell(\gamma\circ f)$.
 \end{enumerate}
\end{definition}

\medskip

\begin{definition}
 A \emph{weak length space} is a triple $(X,\Gamma,\ell)$ where $X$ is a topological space, $\Gamma$ is a semigroupoid of paths on $X$ and $\ell$ is weak length structure on $(X,\Gamma)$.
\end{definition}

\medskip

Let us give a few additional definitions:

\smallskip

$ \bullet$ \  The  weak length structure $\Gamma$ is   \emph{separating} if  
$\ell(\gamma)>0$ for any non constant path $\gamma$ in $\Gamma$.

\smallskip

$ \bullet$ \  The weak length structure $\Gamma$ is said to be \emph{reversible} if for every $\gamma$ in $\Gamma$ we have 
$\gamma^{-1}\in \Gamma$ and $\ell(\gamma^{-1})=\ell(\gamma)$, where $\gamma^{-1}$ is the reverse path of $\gamma$.

\smallskip

$ \bullet$ \   Let $(X,\Gamma,\ell)$ be a weak length space such that $\gamma^{-1}\in \Gamma$ for every $\gamma$ in $\Gamma$. Then one defines the  \emph{arithmetic symmetrization} of the weak length structure $\ell$  to be the weak length structure ${}^s\ell$ on $(X,\Gamma)$ given by
\[
 {}^s\ell(\gamma)=\frac{1}{2}\left(\ell(\gamma^{-1})+\ell(\gamma)\right).
 \]

\smallskip

Given a groupoid of paths $\Gamma$ on a topological space $X$, for  $x$ and $y$ in $X$, we let   \[\Gamma_{x,y}=\{\gamma\in \Gamma\ \vert \ \gamma \textrm{ \emph{joins $x$ to $y$}} \}.\]

\begin{lemma}
Let $(X,\Gamma,\ell)$ be a topological space equipped with a semigroupoid of paths and with a weak length structure.
Then the function $\delta_{\ell} : X\times X \to \mathbb{R}$ defined by
\begin{equation} \label{weakm}
 \displaystyle \delta_{\ell}(x,y)=\inf_{\gamma\in\Gamma_{x,y}} \ell(\gamma),
\end{equation}
is a weak metric on $X$. This weak metric is symmetric if $\ell$ is symmetric.
If $\delta_{\ell}$ is separating, then $\ell$ is separating.
\end{lemma}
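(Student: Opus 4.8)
The plan is to check directly the two defining axioms of a weak metric for $\delta_\ell$, and then to read off symmetry and separation from the corresponding properties of $\ell$; every ingredient is furnished by the axioms for a semigroupoid of paths and for a weak length structure, so the only real work is bookkeeping with infima. First I would record that $\delta_\ell(x,x)=0$: by hypothesis $\Gamma$ contains the constant path $c_x$ at $x$, which lies in $\Gamma_{x,x}$ and has $\ell(c_x)=0$, so the infimum defining $\delta_\ell(x,x)$ is $0$ (it is $\geq 0$ since $\ell\geq 0$). I would also point out that, strictly speaking, $\delta_\ell$ takes values in $[0,\infty]$ rather than in $\mathbb{R}$, the value $+\infty$ occurring exactly when $\Gamma_{x,y}=\emptyset$, with the convention $\inf\emptyset=+\infty$.

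The core step is the triangle inequality. Given $x,y,z\in X$ and arbitrary $\gamma_1\in\Gamma_{x,y}$, $\gamma_2\in\Gamma_{y,z}$, the matching-endpoint condition $\gamma_1(b)=y=\gamma_2(c)$ lets me form the concatenation $\gamma_1*\gamma_2\in\Gamma$, which joins $x$ to $z$; hence $\gamma_1*\gamma_2\in\Gamma_{x,z}$. Additivity of $\ell$ then gives
$$\delta_\ell(x,z)\leq\ell(\gamma_1*\gamma_2)=\ell(\gamma_1)+\ell(\gamma_2).$$
Passing to the infimum first over $\gamma_2$ and then over $\gamma_1$ produces $\delta_\ell(x,z)\leq\delta_\ell(x,y)+\delta_\ell(y,z)$. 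The only care needed here is with the value $+\infty$: if $\Gamma_{x,y}$ or $\Gamma_{y,z}$ is empty the right-hand side is $+\infty$ and there is nothing to prove, and otherwise the two-stage passage to the infimum is legitimate because $\inf_{\alpha}\inf_{\beta}\big(f(\alpha)+g(\beta)\big)=\inf_\alpha f(\alpha)+\inf_\beta g(\beta)$ in $[0,\infty]$.

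Symmetry and separation are then short. If $\ell$ is symmetric, i.e.\ reversible in the sense that $\gamma^{-1}\in\Gamma$ and $\ell(\gamma^{-1})=\ell(\gamma)$ for every $\gamma$, then $\gamma\mapsto\gamma^{-1}$ is an involution carrying $\Gamma_{x,y}$ bijectively onto $\Gamma_{y,x}$ and preserving $\ell$; the two infima therefore agree and $\delta_\ell(x,y)=\delta_\ell(y,x)$. For separation I would argue by contraposition: if $\ell$ is not separating there is a non-constant path $\gamma\in\Gamma$, say from $x=\gamma(a)$ to $y=\gamma(b)$, with $\ell(\gamma)=0$; then $0\leq\delta_\ell(x,y)\leq\ell(\gamma)=0$, so $\delta_\ell(x,y)=0$, and this contradicts separation of $\delta_\ell$ provided $x\neq y$.

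The step I expect to be the genuine obstacle is exactly this last proviso when $\gamma$ is a non-constant \emph{loop}, so that $x=y$: a zero-length loop does not on its own exhibit two distinct points at zero distance, and since $\Gamma$ is assumed closed only under concatenation --- not under passage to sub-paths --- one cannot in general cut such a loop into an admissible piece with distinct endpoints. I would therefore make explicit the intended notion of separation for $\delta_\ell$, namely that $\delta_\ell(x,y)=0$ forces $x=y$ for the points realized as endpoints of admissible paths, and remark that in every concrete situation of the paper $\ell$ is the integral of a Lagrangian which is a genuine norm on each tangent fibre, so that every non-constant path has strictly positive length and the loop ambiguity does not arise.
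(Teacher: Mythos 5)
Your proof is correct and is exactly the direct verification the paper intends: the paper itself supplies no argument, stating only that ``the proof is immediate from the definitions,'' and your checks of the two weak-metric axioms via constant paths and concatenation are the standard route. Your caveat about non-constant zero-length \emph{loops} is a legitimate point the paper glosses over --- since $\Gamma$ is not assumed closed under passage to sub-paths, the final implication as literally stated does need either the reading of ``separating'' you make explicit or the additional observation that in all of the paper's applications $\ell$ is an integral of a nonnegative Lagrangian along piecewise $C^1$ paths, where sub-paths are admissible and additivity forces every piece of a zero-length loop to have zero length, so that a non-constant loop would indeed produce two distinct points at mutual distance zero.
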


The proof is immediate from the definitions.
\qed

\medskip

\begin{definition} Let $(X,\Gamma,\ell)$ be a topological space equipped with a semigroupoid of paths and with a weak length structure. The weak metric $\delta_{\ell}$ defined in (\ref{weakm}) is called the \emph{weak metric associated to the weak length structure $\ell$}. A \emph{weak length metric space} is a weak metric space obtained from the triple $(X,\Gamma,\ell)$ by equipping $X$ with the associated weak metric $\delta_{\ell}$. 
\end{definition}

Given a weak length structure $\ell$ on a pair $(X,\Gamma)$ as above, we can consider, on the one hand, the associated weak metric $\delta_{\ell}$ and then its arithmetic symmetrization ${}^s\delta_{\ell}$, and on the other hand, the arithmetic symmetrization ${}^s\ell$ and the resulting weak metric $\delta_{{}^s\ell}$. The two functions ${}^s\delta_{\ell}$ and $\delta_{{}^s\ell}$ defined on $X\times X$ are not necessarily equal, but there is an inequality that is always satisfied, as it is shown in the following:

\begin{lemma}\label{lem:eq} Let $(X,\Gamma,\ell, \delta_{\ell})$ be a  weak length metric space. Then, we have, for every $x$ and $y$ in $X$,
\[\delta_{{}^s\ell}(x,y) \geq {}^s\delta_{\ell}(x,y).\]
In general, we do not have equality.
\end{lemma}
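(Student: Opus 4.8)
The plan is to reduce the inequality to a single elementary observation and then pass to the infimum. The key conceptual point is that $\delta_{{}^s\ell}$ optimizes over a \emph{single} curve traversed in both directions, whereas ${}^s\delta_{\ell}$ optimizes the two directions \emph{independently}; the former is the more constrained minimization problem, and hence yields the larger value.

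First I would fix points $x$ and $y$ in $X$ and an arbitrary path $\gamma\in\Gamma_{x,y}$. Since $\gamma^{-1}\in\Gamma$ (the standing hypothesis needed to define ${}^s\ell$) and $\gamma$ joins $x$ to $y$, the reversed path $\gamma^{-1}$ joins $y$ to $x$, so $\gamma^{-1}\in\Gamma_{y,x}$. By the definition of $\delta_{\ell}$ as an infimum of lengths over $\Gamma_{x,y}$, we have $\ell(\gamma)\geq\delta_{\ell}(x,y)$, and likewise $\ell(\gamma^{-1})\geq\delta_{\ell}(y,x)$. Averaging these two bounds gives
\[{}^s\ell(\gamma)=\frac{1}{2}\left(\ell(\gamma)+\ell(\gamma^{-1})\right)\geq\frac{1}{2}\left(\delta_{\ell}(x,y)+\delta_{\ell}(y,x)\right)={}^s\delta_{\ell}(x,y).\]
Since this holds for every $\gamma\in\Gamma_{x,y}$ and the right-hand side does not depend on $\gamma$, I take the infimum of the left-hand side over all $\gamma\in\Gamma_{x,y}$ to obtain
\[\delta_{{}^s\ell}(x,y)=\inf_{\gamma\in\Gamma_{x,y}}{}^s\ell(\gamma)\geq{}^s\delta_{\ell}(x,y),\]
which is the desired inequality.

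There is no real obstacle in the inequality itself; it is a formal consequence of bounding the averaged length term by term before taking the infimum. The only point deserving further comment is the concluding remark that equality can fail, and here I would exhibit an explicit example rather than argue abstractly. The mechanism is exactly the gap isolated in the proof: equality would force a single curve to realize (or approach) both one-sided infima $\delta_{\ell}(x,y)$ and $\delta_{\ell}(y,x)$ simultaneously, which cannot be expected once the length of a path depends genuinely on its orientation. The simplest instance is a space carrying two distinct paths $\alpha,\beta\in\Gamma_{x,y}$ with $\ell(\alpha)$ small but $\ell(\alpha^{-1})$ large, and $\ell(\beta^{-1})$ small but $\ell(\beta)$ large: then $\delta_{\ell}(x,y)$ and $\delta_{\ell}(y,x)$ are each realized, but by \emph{different} curves, so ${}^s\delta_{\ell}(x,y)$ stays small while every single curve has large symmetrized length, making $\delta_{{}^s\ell}(x,y)$ strictly larger.
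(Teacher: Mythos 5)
Your proof is correct and is essentially the argument the paper gives: both rest on the term-by-term bounds $\ell(\gamma)\geq\delta_{\ell}(x,y)$ and $\ell(\gamma^{-1})\geq\delta_{\ell}(y,x)$, the only cosmetic difference being that you bound every $\gamma\in\Gamma_{x,y}$ and then take the infimum, while the paper picks an $\epsilon$-approximate minimizer and lets $\epsilon\to 0$. Your sketched counterexample to equality is also the same mechanism as the paper's circle example (two paths of lengths $9$ and $1$ whose reverses have lengths $1$ and $9$), so nothing is missing.
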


\begin{proof}
For every $\epsilon >0$, we can find an element $\gamma$ in $\Gamma_{x,y}$ satisfying

\begin{eqnarray*}
\delta_{{}^s\ell}(x,y)&\geq&{}^s\ell(\gamma)-\epsilon\\
&=& \frac{1}{2}\left(\ell(\gamma) +\ell(\gamma^{-1})\right)-\epsilon\\
&\geq&
 \frac{1}{2}\left(\delta_{\ell}(x,y) + \delta_{\ell}(y,x))\right)-\epsilon
 \\ &=&
  {}^s\delta_{\ell}(x,y) -\epsilon.
 \end{eqnarray*}
Since $\epsilon > 0$ is arbitrary, we obtain the required result.
\end{proof}
An example where equality fails in the above Lemma \ref{lem:eq} is the following:

\begin{example}\label{ex:ex} Let $X$ be homeomorphic to the circle, equipped with the semigroupoid of all piecewise smooth paths, and let $x$ and $y$ be two distinct points in $X$. Up to reparametrization, there are exactly two injective paths in $X$ joining $x$ to $y$, and we call them respectively the ``upper path" and the ``lower path". Likewise, there are  two injective paths from $y$ to $x$ (with the same adjectives). We can easily put a weak length space structure $\ell$ on  $X$ such that the following properties hold:
\begin{itemize} 
 \item the length of the upper path from $x$ to $y$ is equal to 9;
  \item the length of the lower path from $x$ to $y$ is equal to 1;
  \item the length of the upper path from $y$ to $x$ is equal to 1;
  \item the length of the lower path from $y$ to $x$ is equal to 9.
\end{itemize}

\vspace{-.35cm}
\begin{figure}[htp]
\centering
 \includegraphics[height=4cm]{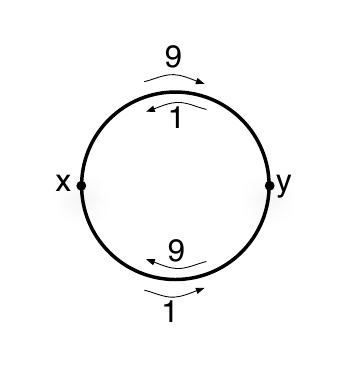}
 \caption{\small{The length space used in Example \ref{ex:ex}.}}\label{fig1}
\end{figure}
\vspace{-.1cm}

With these conditions, the associated distances satisfy
$\delta_{\ell}(x,y)=1=\delta_{\ell}(y,x)$.

Now consider the symmetrization ${}^s\ell$ of the length function, and the associated 
distance function $\delta_{{}^{s}\ell}$. The  ${}^s\ell$-length of the four injective paths 
considered above is equal to 5, and we have 
$ \delta_{{}^{s}\ell}(x,y)=5$,
 which is not equal to the arithmetic mean of 
$\delta_{\ell}(x,y)$ and $\delta_{\ell}(y,x)$.
 \end{example}

There is an important instance where equality holds in Lemma \ref{lem:eq}, and to state it we make the following definition:

\begin{definition}[Minimal and bi-minimal paths]
Let $(X,\Gamma,\ell, \delta_{\ell})$ be a  weak length metric space. A path $\gamma\in\Gamma_{x,y}$ is said to be \emph{minimal} if $\gamma^{-1}\in \Gamma_{y,x}$ and if $\ell(\gamma)=\delta_{\ell}(x,y)$. The path $\gamma$ is said to be \emph{bi-minimal} if $\gamma$ and $\gamma^{-1}$ are minimal, that is, if $\gamma^{-1}\in \Gamma_{y,x}$, $\ell(\gamma)=\delta_{\ell}(x,y)$ and $\ell(\gamma^{-1})=\delta_{\ell}(y,x)$.
\end{definition}

The following proposition will be useful:
\begin{proposition}\label{prop:minimizing}
 Let $(X,\Gamma,\ell, \delta_{\ell})$ be a weak length metric space and let  $x$ and $y$ be two points in $X$ such that there exists a bi-minimal path  $\gamma\in\Gamma_{x,y}$. Then, we have
 \[{}^s\delta_{\ell}(x,y)= \delta_{{}^s\ell}(x,y).\]

\end{proposition}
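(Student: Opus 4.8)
The plan is to observe that Lemma \ref{lem:eq} already supplies one of the two inequalities for free: for every $x$ and $y$ we have $\delta_{{}^s\ell}(x,y) \geq {}^s\delta_{\ell}(x,y)$, with no hypothesis on the existence of minimizers. So the entire content of the proposition is the reverse inequality $\delta_{{}^s\ell}(x,y) \leq {}^s\delta_{\ell}(x,y)$, and the bi-minimal path $\gamma$ is exactly the tool that produces it.

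First I would exploit the fact that $\delta_{{}^s\ell}(x,y)$ is defined as an infimum over all paths in $\Gamma_{x,y}$, so any single admissible path gives an upper bound. Since $\gamma \in \Gamma_{x,y}$ is bi-minimal, in particular $\gamma^{-1} \in \Gamma_{y,x}$, so ${}^s\ell(\gamma)$ is defined, and $\gamma$ is a legitimate competitor in the infimum. Hence
\[
\delta_{{}^s\ell}(x,y) \leq {}^s\ell(\gamma).
\]

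Next I would evaluate ${}^s\ell(\gamma)$ using the definition ${}^s\ell(\gamma) = \frac{1}{2}\left(\ell(\gamma) + \ell(\gamma^{-1})\right)$ together with the two equalities built into bi-minimality, namely $\ell(\gamma) = \delta_{\ell}(x,y)$ and $\ell(\gamma^{-1}) = \delta_{\ell}(y,x)$. These give
\[
{}^s\ell(\gamma) = \frac{1}{2}\left(\delta_{\ell}(x,y) + \delta_{\ell}(y,x)\right) = {}^s\delta_{\ell}(x,y).
\]
Chaining this with the previous display yields $\delta_{{}^s\ell}(x,y) \leq {}^s\delta_{\ell}(x,y)$, and combining with the inequality from Lemma \ref{lem:eq} closes the argument.

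I do not expect any serious obstacle here: the proof is essentially a one-line substitution, and the real insight is conceptual rather than technical. The point is that bi-minimality is precisely the condition that makes a \emph{single} path realize both one-sided infima $\delta_{\ell}(x,y)$ and $\delta_{\ell}(y,x)$ simultaneously; this is what allows the symmetrization to commute in Lemma \ref{lem:eq}, whereas Example \ref{ex:ex} shows exactly what goes wrong when the minimizers of $\delta_{\ell}(x,y)$ and $\delta_{\ell}(y,x)$ are forced to be distinct geometric objects (the ``upper'' versus ``lower'' path on the circle). The only care needed is to check that $\gamma$ is genuinely admissible in the infimum defining $\delta_{{}^s\ell}$, which is guaranteed by the clause $\gamma^{-1}\in\Gamma_{y,x}$ in the definition of bi-minimal.
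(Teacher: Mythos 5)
Your proof is correct and follows essentially the same route as the paper: the paper writes the argument as a chain of equalities, and the key step there (that $\frac{1}{2}(\ell(\gamma)+\ell(\gamma^{-1}))$ equals $\inf_{\alpha\in\Gamma_{x,y}}{}^s\ell(\alpha)$) is exactly your two inequalities --- the bi-minimal path as a competitor for ``$\leq$'' and the definition of $\delta_\ell$ as an infimum for ``$\geq$'' --- packaged together. Your explicit appeal to Lemma \ref{lem:eq} for one direction is a harmless reorganization of the same content.
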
 
\begin{proof}
Let $\gamma$ be a bi-minimal path from $x$ to $y$. Then,   
\begin{eqnarray*}
{}^s\delta_{\ell}(x,y)&=&\frac{1}{2}\left(\delta_{\ell}(x,y) + \delta_{\ell}(y,x) \right)\\
&=& \frac{1}{2}\left(\ell(\gamma) +\ell(\gamma^{-1})\right)\\&=&
\frac{1}{2}\inf \{\ell(\alpha)+\ell(\alpha^{-1})\tq \alpha \in\Gamma_{x,y}\}
 \\&=&
 \inf \{{}^s\ell(\alpha)\tq \alpha \in\Gamma_{x,y}\}
 \\&=&
\delta_{{}^s\ell}(x,y).
\end{eqnarray*}
\end{proof}

\section{Weak Finsler structures}

In the paper \cite{PT2}, we introduced the following definition: 

\begin{definition}
Let $M$ be a $C^1$ manifold and let $TM$ be its tangent bundle. A \emph{weak  Finsler structure} on $M$ is a subset $\widetilde{\Omega}\subset TM$ such that for each $x$ in $M$, the subset $\Omega_x=\widetilde{\Omega}\cap T_xM$ of the tangent space $T_xM$ of $M$ at $x$ is convex and contains the origin.
\end{definition}

We refer to the paper \cite{PT2} for a list of examples. 

\begin{definition}
The \emph{Lagrangian} of a weak Finlser structure $\widetilde{\Omega}$ on a $C^1$ manifold $M$ is the function on the tangent bundle $TM$ defined by
\[p(x,\xi)=p_{\widetilde{\Omega}}(x,\xi)=\inf\{ t > 0 \tq  t^{-1}\xi\in\Omega_x\}.\]
\end{definition}

We use the same letter $p$ to denote the Lagrangian and the Minkowski norm; this will be justified below (see Remark \ref{prop:s}).

We shall say that the weak Finsler structure $\widetilde{\Omega}$ is \emph{smooth} if $p$ is smooth on the complement of the
zero section of $TM$.

Let $M$ be a $C^1$ manifold equipped with a weak  Finlser structure $\widetilde{\Omega}$ and with Lagrangian $p$. There is an associated weak length structure on $M$, defined by taking $\Gamma$ to be the semigroupoid of piecewise $C^1$ paths, and defining, for each $\gamma:[a,b]\to M$ in $\Gamma$,
\begin{equation}\label{Lebesgue} 
  \ell(\gamma)=\int_a^b p(\gamma(t),\stackrel{\cdot}\gamma (t)) dt.
\end{equation}
It is proved in \cite{PT2} that the function $p : TM \to [0,\infty]$ is Borel-measurable, hence  the integral in (\ref{Lebesgue}) is well-defined.

\section{Symmetrization of convex sets}\label{s:sym}

We just observed that a weak  Finlser structure on a Manifold $M$ defines a weak length structure
on that manifold by Formula (\ref{Lebesgue}). We want to understand  the   symmetrization of this
 weak length structure. This question can first be addressed at the level of convex geometry as follows: given a convex 
set $\Omega$ in $\mathbb{R}^n$, define a symmetrization of $\Omega$ which is natural and which is useful in Finsler geometry. In this section, we define such a notion.

We start by recalling a few notions in convex geometry that will be used in the sequel.

\begin{definition}\label{def:radial}
Let $\Omega\subset \mathbb{R}^n$ be a (not necessarily open)  convex set and let $x$ be a point in $\overline{\Omega}$.
The \emph{radial function of $\Omega$ with respect to $x$} is the function $r_{\Omega,x}:\mathbb{R}^n\to \mathbb{R}\cup\{\infty\}$ defined by
\[
r_{\Omega,x}(\xi)=\sup\{t\in\mathbb{R}\ \vert \ (x+t\xi)\in \Omega\}.\]
\end{definition}

\begin{definition}\label{def:Minkowski} The \emph{Minkowski function of $\Omega$ with respect to $x$} is the function  $p_{\Omega,x}:\mathbb{R}^n\to \mathbb{R}\cup\{\infty\}$ defined by
\[
p_{\Omega,x}(\xi)=\frac{1}{r_{\Omega,x}(\xi)}.\]
\end{definition}

Note that in \S\ref{sec:Min}, we already considered the function $p_{\Omega,x}$ with $x=0$.
The following proposition gives a few basic properties of the Minkowski function.
\begin{proposition}\label{prop:min}
Let $\Omega$ be a convex subset of $\mathbb{R}^n$. For every $x$ in $\Omega$ and for every $\xi$ and $\eta$ in $\mathbb{R}^n$, we have

\begin{enumerate}
\item $p_{\Omega,x}(\xi)= \inf\{t\geq 0\ \vert \ \xi\in t(\Omega-x)\}$;
\item if the ray $\{x+t\xi\ \vert \ t\geq 0\}$ is contained in $\Omega$, then $p_{\Omega,x}(\xi)= 0$; 
\item $p_{\Omega,x}(\lambda\xi)= \lambda p_{\Omega,x}(\xi)$ for all $\lambda\geq 0$;
\item $p_{\Omega,x}(\xi+\eta)\leq p_{\Omega,x}(\xi) + p_{\Omega,x}(\eta)$;
\item the Minkowski function $p_{\Omega,x}$ is convex;
\item  if $x$ is in $\interieur{\Omega}$, then $p_{{\Omega},x}$ is continuous;
\item if ${\Omega}$ is closed, then ${\Omega}=\{y=x+\xi \vert \ p_{{\Omega},x}(\xi)\leq 1\}$.
\end{enumerate}
\end{proposition}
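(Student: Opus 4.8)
The plan is to establish the seven properties of $p_{\Omega,x}$ by reducing everything to the case $x=0$ and then verifying each claim from the definition $p_{\Omega,x}(\xi)=1/r_{\Omega,x}(\xi)$, where $r_{\Omega,x}(\xi)=\sup\{t\in\mathbb{R}\mid x+t\xi\in\Omega\}$. A useful first observation is that $p_{\Omega,x}(\xi)=p_{\Omega-x,0}(\xi)$, so after the translation $\Omega\mapsto\Omega-x$ I may assume $x\in\Omega$ is the origin. Under this normalization $p_{\Omega,x}$ coincides with the Minkowski weak norm $p$ of Section~\ref{sec:Min}, which justifies the reuse of notation. I would then treat the properties roughly in the listed order, since the later ones lean on the earlier ones.

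For property (1), I would rewrite $r_{\Omega,0}(\xi)=\sup\{t\ge 0\mid t\xi\in\Omega\}$ (the supremum over all $t\in\mathbb{R}$ equals the supremum over $t\ge 0$ because $0\in\Omega$, so every attainable positive $t$ forces the condition to be vacuous for negative $t$ when computing the Minkowski function), and observe that $t\xi\in\Omega$ is equivalent to $\xi\in (1/t)(t\xi)\in$\ldots; more directly, $\xi\in s\,\Omega$ means $s^{-1}\xi\in\Omega$, i.e. $r_{\Omega,0}(\xi)\ge s^{-1}$, i.e. $p_{\Omega,0}(\xi)\le s$. Taking the infimum over admissible $s$ yields the stated formula. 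Property (2) is immediate: if the whole ray $\{t\xi\mid t\ge 0\}$ lies in $\Omega$, then $r_{\Omega,0}(\xi)=+\infty$, hence $p_{\Omega,0}(\xi)=0$. Property (3) follows from the scaling relation $r_{\Omega,0}(\lambda\xi)=\lambda^{-1}r_{\Omega,0}(\xi)$ for $\lambda>0$ (with the case $\lambda=0$ handled separately since $p_{\Omega,0}(0)=0$), which inverts to give positive homogeneity.

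Property (4), subadditivity, is the technical heart and the step I expect to cost the most care, because it must be proved while allowing the value $+\infty$ and without assuming $\Omega$ is bounded, open, or that $p$ is finite. Using the characterization in (1), I would take $s>p_{\Omega,0}(\xi)$ and $s'>p_{\Omega,0}(\eta)$, so that $\xi/s\in\Omega$ and $\eta/s'\in\Omega$; convexity of $\Omega$ gives that the convex combination $\frac{s}{s+s'}\cdot\frac{\xi}{s}+\frac{s'}{s+s'}\cdot\frac{\eta}{s'}=\frac{\xi+\eta}{s+s'}$ lies in $\Omega$, whence $p_{\Omega,0}(\xi+\eta)\le s+s'$; letting $s\downarrow p_{\Omega,0}(\xi)$ and $s'\downarrow p_{\Omega,0}(\eta)$ finishes it. The edge cases where one of the values is $0$ or $+\infty$ need a short separate remark so the inequality reads correctly in $[0,\infty]$. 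Property (5), convexity of $p_{\Omega,0}$, is then a formal consequence of (3) and (4): for $0\le\lambda\le 1$, $p(\lambda\xi+(1-\lambda)\eta)\le p(\lambda\xi)+p((1-\lambda)\eta)=\lambda p(\xi)+(1-\lambda)p(\eta)$.

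For property (6), continuity when $x\in\interieur{\Omega}$, I would note that $0\in\interieur{\Omega}$ means $\Omega$ contains a ball $B(0,\rho)$, so $p_{\Omega,0}(\xi)\le\rho^{-1}|\xi|$ is finite and bounded on bounded sets; a convex function that is finite and locally bounded above on an open set is locally Lipschitz, hence continuous, so I would invoke that standard fact from convex analysis (alternatively derive the Lipschitz bound $|p(\xi)-p(\eta)|\le\rho^{-1}|\xi-\eta|$ directly from subadditivity and homogeneity). Finally, property (7) for closed $\Omega$ unwinds the definitions: $p_{\Omega,0}(\xi)\le 1$ means $r_{\Omega,0}(\xi)\ge 1$, i.e. $\sup\{t\mid t\xi\in\Omega\}\ge 1$; because $\Omega$ is closed and convex with $0\in\Omega$, the set $\{t\ge 0\mid t\xi\in\Omega\}$ is a closed interval containing its finite supremum, so $r_{\Omega,0}(\xi)\ge 1$ forces $1\cdot\xi=\xi\in\Omega$, and conversely $\xi\in\Omega$ gives $r_{\Omega,0}(\xi)\ge 1$. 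This yields the set equality $\Omega=\{x+\xi\mid p_{\Omega,x}(\xi)\le 1\}$ after translating back.
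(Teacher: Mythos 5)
Your proposal is correct and complete. Note that the paper itself gives no proof of Proposition \ref{prop:min} --- it simply states ``The proof is contained in \cite{PT3}'' --- so there is no in-paper argument to compare against; what you have written is the standard treatment of the Minkowski gauge (reduce to $x=0$, characterize $p$ via the sublevel description in (1), prove subadditivity from convexity of $\Omega$ by the convex-combination trick, deduce convexity from homogeneity plus subadditivity, get continuity from the local bound $p(\xi)\leq\rho^{-1}|\xi|$ together with subadditivity, and use closedness of the trace of $\Omega$ on each ray for (7)), and it correctly handles the nonstandard features of this setting (values in $[0,\infty]$, unbounded $\Omega$, $x$ possibly on the boundary of finiteness). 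The only point worth tightening is in your proof of (1): the chain ``$\xi\in s\Omega$ iff $r_{\Omega,0}(\xi)\geq s^{-1}$'' is only an implication in the forward direction at the endpoint $s^{-1}=r_{\Omega,0}(\xi)$ when $\Omega$ is not closed along that ray; the equality of the infimum with $p_{\Omega,0}(\xi)$ still holds because, by convexity, $\{t\geq 0 \mid t\xi\in\Omega\}$ is an interval containing $[0,r_{\Omega,0}(\xi))$, so every $s$ strictly above $p_{\Omega,0}(\xi)$ is admissible --- exactly the argument you already use for (4). That is a presentational nicety, not a gap.
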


The proof is contained in \cite{PT3}.

We can give explicit formulas for the Minkowski  function $p_{{\Omega},x}$ in various cases. For instance, the Minkowski function of the closed  ball $B=B(0,R)$ in $\mathbb{R}^n$ of radius $R$ and center $0$ with respect to any point $x$ in $B$ is given by 
\[p_{B,x}(\xi) = \frac{\sqrt{\langle \xi,x\rangle^2+ (R^{2}-\vert x\vert^2)\vert \xi\vert^2}+\langle \xi,x\rangle}{(R^{2}-\vert x\vert^2)}.
\]
The Minkowski function of a half-space $ H= \{ x \in \mathbb{R}^n \tq \langle \nu , x \rangle  \leq s\}$,
where $\nu$ is a vector in $\mathbb{R}^n$ (which is orthogonal to the hyperplane bounding $H$) and where $s$ is a real number,
with respect to a point $x$ in $H$, is given by
\[  p_{H,x}(\xi) = 
  \max\left( \frac{\langle \nu ,\xi  \rangle}{s-\langle \nu , x \rangle},0\right).
\]
 The computations are made in \cite{PT3}.
 
 \medskip
 
We start by explaining what is the  symmetrization  of a convex set in the special
case where this set is a segment in $\mathbb{R}$.
\begin{definition}[Harmonic symmetrization of a segment] \label{def:h}
We first consider compact segments. Let $[a_1,a_2]$   be a compact segment in $\mathbb{R}$ and let $x$ be a point in $[a_1,a_2]$.
The  \emph{harmonic symmetrization} of $[a_1,a_2]$  with respect to $x$  is the segment $[b_1,b_2]$ defined by the following two properties:
\begin{enumerate}
\item $x$ is the center of $[b_1,b_2]$;
\item \label{2} $\displaystyle \frac{1}{\vert b_1-x\vert}=\frac{1}{2}\left( 
\frac{1}{\vert a_1-x\vert}+ \frac{1}{\vert a_2-x\vert}\right)$;
\item \label{3} $(a_2-a_1)\in\mathbb{R}_+(b_2-b_1)$.
\end{enumerate}
In words, the definition says that  $[b_1,b_2]$ is the  harmonic symmetrization at $x$ of $[a_1,a_2]$ if
$[b_1,b_2]$ is centered at $x$ and if its half-length is the harmonic mean of $|a_1-x|$ and $|a_2-x|$.

We then define the harmonic symmetrization of an open segment $(a_1,a_2)$ as the interior of the harmonic symmetrization of the closure $[a_1,a_2]$ of $(a_1,a_2)$. We can likewise define harmonic symmetrizations of half-open intervals. The harmonic symmetrization of a half-open interval $[a_1,a_2)$ is a half-open interval $[b_1,b_2)$ such that the closed interval $[b_1,b_2]$ is the harmonic symmetrization of the closed interval $[a_1,a_2]$. (Note that the harmonic symmetrization of a half-open interval is not symmetric. The notion of harmonic symmetrization is well-behaved for open and closed convex sets.)

Next, we define the  harmonic symmetrization of an unbounded segment in $\mathbb{R}$ by extending the above definition by continuity.  More precisely, if $a_2$ but not $a_1$ is at infinity, e.g. if $[a_1,a_2]$ is an infinite ray $[a_1,\infty)$, then, extending by continuity the values given by Equation (\ref{2}) above, the value $\vert a_2-x\vert$
 is infinite, the value  $\vert a_1-x\vert $ is finite, and therefore the value $\vert b_1 - x\vert$ is finite. In particular,  the harmonic  symmetrization of an infinite ray  with respect to a point on that ray is a bounded segment. An analogous definition holds when $a_1$ is at infinity, and not $a_2$.

Finally, by extending continuously the values given by Equation (\ref{2}), the harmonic symmetrization of the whole real line is the real line itself.
\end{definition}

We shall use the unified notation 
\[
 I_2=\mathcal{H}(I_1,x)
\] 
to denote the fact that the interval $I_2$ is the harmonic symmetrization of the interval $I_1$
with respect to $x$.

\medskip

Let us now consider an arbitrary convex set ${\Omega}$ in $\mathbb{R}^n$. For any point $x$ in ${\Omega}$ and for any  non-zero vector $\xi$ in $\mathbb{R}^n$,  the  \emph{section of ${\Omega}$ through $x$ in the direction $\xi$} is the interval $S_{{\Omega},x}(\xi) = (x+\mathbb{R}\xi )\cap {\Omega}$.

\begin{definition}[Harmonic symmetrization of a convex set]\label{def:harsym}
 Let ${\Omega}$ be a convex subset of $\mathbb{R}^n$ and let $x\in {\Omega}$.
The  \emph{harmonic symmetrization of ${\Omega}$  centered at $x$} is the set $\mathcal{H}({\Omega},x)$ obtained  by replacing each section of ${\Omega}$ through $x$ by its harmonic symmetrization with respect to $x$. In other words, we have
\begin{equation*} \label{eqdef.harm}
 \mathcal{H}({\Omega},x) = \bigcup_{\xi \in \mathbb{S}^{n-1}}  \mathcal{H}( S_{{\Omega},x}(\xi) , x ).
\end{equation*}
\end{definition}

The Minkowski function of  $\mathcal{H}({\Omega},x)$ with respect to $x$ will be denoted by  $q_{\Omega,x}$, that is $$q_{\Omega,x} = p_{\mathcal{H}({\Omega},x),x}.$$
 The following results then follow directly from the definitions:
\begin{proposition}\label{prop.minkharsym}
Let ${\Omega}$ be a convex subset of $\mathbb{R}^n$ and let $x$ be an element of $\Omega$. Then, 
 the Minkowski function of  $\mathcal{H}=\mathcal{H}({\Omega},x)$ with respect to $x$ is given by
$$q_{\Omega,x}(\xi)=\frac{1}{2}(p_{\Omega,x}(\xi)+p_{\Omega,x}(-\xi)).$$
In particular, we have 
$$ \mathcal{H}({\Omega},x)=\{y\in\mathbb{R}^n\ \vert \ \frac{1}{2}\left( p_{{\Omega},x}(y-x)+p_{{\Omega},x}(x-y)\right)\leq 1\}$$
if $\Omega$ is closed, and
$$ \mathcal{H}({\Omega},x)=\{y\in\mathbb{R}^n\ \vert \ \frac{1}{2}\left( p_{{\Omega},x}(y-x)+p_{{\Omega},x}(x-y)\right) < 1\}$$
if $\Omega$ is open.
\end{proposition}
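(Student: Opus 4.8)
The plan is to prove the identity one direction at a time, reducing everything to the one-dimensional harmonic symmetrization of Definition \ref{def:h}. Fix a nonzero vector $\xi$ and consider the line $x+\mathbb{R}\xi$. Along this line the section $S_{\Omega,x}(\xi)$ is the segment whose endpoint in the direction $+\xi$ lies at distance $r_{\Omega,x}(\xi)\,|\xi|$ from $x$ and whose endpoint in the direction $-\xi$ lies at distance $r_{\Omega,x}(-\xi)\,|\xi|$ from $x$. Since $p_{\Omega,x}=1/r_{\Omega,x}$, the reciprocals of these two distances are exactly $p_{\Omega,x}(\xi)/|\xi|$ and $p_{\Omega,x}(-\xi)/|\xi|$. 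This is the only translation between the distance-based language of Definition \ref{def:h} and the Minkowski/radial language that the proof needs.

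Next I would feed this into Definition \ref{def:h}. Writing $\mathcal{H}=\mathcal{H}(\Omega,x)$, the harmonic symmetrization $\mathcal{H}(S_{\Omega,x}(\xi),x)$ is the segment centered at $x$ whose half-length $\rho(\xi)$ satisfies, by the defining harmonic-mean relation and the previous step, $\frac{1}{\rho(\xi)}=\frac{1}{2|\xi|}\left(p_{\Omega,x}(\xi)+p_{\Omega,x}(-\xi)\right)$. The crucial observation is that distinct lines through $x$ meet only at $x$, so the section of the union $\mathcal{H}=\bigcup_{\xi\in\mathbb{S}^{n-1}}\mathcal{H}(S_{\Omega,x}(\xi),x)$ along the line $x+\mathbb{R}\xi$ is exactly the single symmetric segment $\mathcal{H}(S_{\Omega,x}(\xi),x)$, with no contribution from other directions landing on this line. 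Hence the extremal point of $\mathcal{H}$ in the direction $\xi$ sits at distance $\rho(\xi)$ from $x$, giving $r_{\mathcal{H},x}(\xi)=\rho(\xi)/|\xi|$ and therefore
\[
 q_{\Omega,x}(\xi)=p_{\mathcal{H},x}(\xi)=\frac{1}{r_{\mathcal{H},x}(\xi)}=\frac{|\xi|}{\rho(\xi)}=\frac{1}{2}\left(p_{\Omega,x}(\xi)+p_{\Omega,x}(-\xi)\right),
\]
which is the asserted formula.

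For the two set descriptions I would argue directly rather than invoking Proposition \ref{prop:min}(7). Setting $\xi=y-x$, the point $y$ lies on the line $x+\mathbb{R}\xi$ at distance $|y-x|=|\xi|$ from $x$, so $y$ belongs to the closed symmetric segment of half-length $\rho(\xi)$ precisely when $|\xi|\leq\rho(\xi)$, that is, when $q_{\Omega,x}(\xi)=|\xi|/\rho(\xi)\leq 1$; this handles the closed case once one notes that for closed $\Omega$ every section, hence its harmonic symmetrization, is a closed segment. For open $\Omega$ the sections are open segments whose symmetrizations are the corresponding open segments, so the same computation yields the strict inequality $q_{\Omega,x}(\xi)<1$. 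Substituting $p_{\Omega,x}(y-x)$ for $p_{\Omega,x}(\xi)$ and $p_{\Omega,x}(x-y)$ for $p_{\Omega,x}(-\xi)$ produces the two displayed formulas.

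The only genuinely delicate point, and the one I expect to be the main obstacle in a fully rigorous write-up, is the treatment of the unbounded and degenerate sections. When $\Omega$ contains the ray $\{x+t\xi\tq t\geq 0\}$ one has $r_{\Omega,x}(\xi)=\infty$ and $p_{\Omega,x}(\xi)=0$, and when $\Omega$ contains the whole line $x+\mathbb{R}\xi$ both reciprocal-distances vanish. These are exactly the cases covered by the ``extension by continuity'' clauses of Definition \ref{def:h}, and they are consistent with the conventions $1/\infty=0$ and $1/0=\infty$ built into the radial and Minkowski functions: a ray symmetrizes to a bounded segment, matching $q_{\Omega,x}(\xi)=\frac{1}{2}p_{\Omega,x}(-\xi)>0$, while a full line symmetrizes to itself, matching $q_{\Omega,x}(\xi)=0$. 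Once one checks that the formula for $1/\rho(\xi)$ remains valid under these conventions, the computation above goes through verbatim. Finally, the convexity and positive homogeneity of $q_{\Omega,x}$, which follow from Proposition \ref{prop:min}(3) and (5) applied to both $\xi\mapsto p_{\Omega,x}(\xi)$ and $\xi\mapsto p_{\Omega,x}(-\xi)$, confirm that $\mathcal{H}(\Omega,x)$ is again a convex set, as the notation presupposes.
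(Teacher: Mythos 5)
Your proof is correct, and it follows the only natural route: the paper itself gives no argument for this proposition (it is introduced with ``the following results then follow directly from the definitions''), and your reduction to the one-dimensional harmonic symmetrization along each line through $x$, together with the translation $r_{\Omega,x}(\xi)\,|\xi| \leftrightarrow |\xi|/p_{\Omega,x}(\xi)$ and the check of the degenerate ray/line cases, is precisely the computation the authors are leaving implicit. Your direct derivation of the two set descriptions from the open/closed dichotomy of the symmetrized sections is likewise consistent with what Proposition \ref{prop:har}(1) records.
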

 
The following are basic properties of harmonic symmetrization,  they are proved in \cite{PT3}.

\begin{proposition}\label{prop:har} Let ${\Omega}$ be a convex subset of $\mathbb{R}^n$ and let $x$ be an element of $\Omega$. Then, 
\begin{enumerate}

\item if $\Omega$ is open (respectively closed) then $ \mathcal{H}({\Omega},x)$ is open (respectively closed);

\item the closure of $\mathcal{H}({\Omega},x)$ is symmetric with respect to $x$;

\item $\mathcal{H}({\Omega},x)$ is convex;

\item  if $\Omega$ is closed or open, then $\mathcal{H}({\Omega},x)={\Omega}$ if and only if $\Omega$ is symmetric with respect to $x$;

\item   the restriction of the map $({\Omega},x)\mapsto \mathcal{H}({\Omega},x)$ to the set of closed and bounded pointed convex sets $(\Omega,x)$ is continuous with respect to the Hausdorff topology;

\item the assignment $({\Omega},x)\mapsto \mathcal{H}({\Omega},x)$ is equivariant with respect to affine transformations;

\item If ${\Omega}$ is a polyhedron, then so is $ \mathcal{H}({\Omega},x)$;

\item If ${\Omega}$ is bounded by a quadric, then $ \mathcal{H}({\Omega},x)$ is bounded by an ellipsoid.
\end{enumerate}
\end{proposition}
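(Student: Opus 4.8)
The plan is to reduce every assertion to a property of the gauge (Minkowski) function $q_{\Omega,x}$, using Proposition~\ref{prop.minkharsym}, which expresses it as the symmetrized gauge
\[
q_{\Omega,x}(\xi)=\tfrac12\bigl(p_{\Omega,x}(\xi)+p_{\Omega,x}(-\xi)\bigr),
\]
together with the fact (Proposition~\ref{prop:min}(7)) that a closed (resp. open) convex set is recovered as the closed (resp. open) unit sublevel set of its gauge. Once $\mathcal H(\Omega,x)$ is described as $\{y : q_{\Omega,x}(y-x)\le 1\}$ (or $<1$), the first four items become soft consequences of the elementary properties of $p_{\Omega,x}$ listed in Proposition~\ref{prop:min}. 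Indeed, $q_{\Omega,x}$ is manifestly even in $\xi$, which gives the symmetry of the closure (item (2)); it is a convex, positively homogeneous function, being the average of the convex function $p_{\Omega,x}$ with its reflection, so its sublevel sets are convex (item (3)); continuity of $p_{\Omega,x}$ for interior $x$ (resp. lower semicontinuity for closed $\Omega$) transfers to $q_{\Omega,x}$ and yields that the open (resp. closed) sublevel set is open (resp. closed) (item (1)); and $\mathcal H(\Omega,x)=\Omega$ holds iff $q_{\Omega,x}=p_{\Omega,x}$, iff $p_{\Omega,x}(\xi)=p_{\Omega,x}(-\xi)$ for all $\xi$, i.e. iff $\Omega$ is symmetric about $x$, using the gauge-to-set uniqueness for open or closed sets (item (4)).

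The conceptual heart is item (6). Although Definition~\ref{def:h} is phrased through Euclidean half-lengths, harmonic symmetrization depends only on the affine structure, because $1/|a_i-x|$ is exactly the value of the gauge $p_{\Omega,x}$ in the corresponding direction, and the gauge is an affine invariant: for an invertible affine map $A$ one checks directly from Definition~\ref{def:radial} that $r_{A\Omega,Ax}(A\xi)=r_{\Omega,x}(\xi)$, hence $p_{A\Omega,Ax}(A\xi)=p_{\Omega,x}(\xi)$ and therefore $q_{A\Omega,Ax}(A\xi)=q_{\Omega,x}(\xi)$. Substituting $y=Ax+A\xi$ in the sublevel description then gives $\mathcal H(A\Omega,Ax)=A\,\mathcal H(\Omega,x)$. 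Items (7) and (8) reduce to normal forms by this equivariance. For a polyhedron, $p_{\Omega,x}$ is the maximum of finitely many linear functionals (the gauges of the bounding half-spaces, whose explicit form is recalled above), so $q_{\Omega,x}$ is again a piecewise-linear positively homogeneous convex function and its unit sublevel set is a polyhedron (item (7)).

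For item (8), I would compute the symmetrized gauge of a quadric directly. Writing the region as $\{y : G(y)\le 0\}$ with $G(y)=y^{\mathsf T}Ay+2b^{\mathsf T}y+c$ and $G(x)<0$, solving the quadratic $G(x+t\xi)=0$ and taking reciprocals of the two signed roots gives, after the same telescoping that occurs for the ball,
\[
q_{\Omega,x}(\xi)=\frac{\sqrt{\xi^{\mathsf T}Q\,\xi}}{-G(x)},\qquad Q=(Ax+b)(Ax+b)^{\mathsf T}-G(x)\,A.
\]
Thus $\mathcal H(\Omega,x)$ is bounded by $\{\xi^{\mathsf T}Q\,\xi=G(x)^2\}$, which is an ellipsoid precisely when $Q$ is positive definite. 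The key point, and the place where convexity is really used, is that $\xi^{\mathsf T}Q\,\xi=G(x)^2\,q_{\Omega,x}(\xi)^2\ge 0$ with equality only when $q_{\Omega,x}(\xi)=0$, i.e. when the whole line $x+\mathbb R\xi$ lies in $\Omega$; since a convex region bounded by a genuine quadric contains no full line, $Q$ is positive definite and $\mathcal H(\Omega,x)$ is a nondegenerate ellipsoid, even when $\Omega$ is unbounded (the infinite sections produced by an indefinite or degenerate $A$ are tamed by the harmonic mean).

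Finally, for item (5) I expect the main obstacle. The strategy is to factor the map as $(\Omega,x)\mapsto r_{\Omega,x}\mapsto q_{\Omega,x}\mapsto \mathcal H(\Omega,x)$ and to prove continuity of each factor. For pointed bodies with interior base point, Hausdorff convergence is equivalent to uniform convergence of the radial functions on $\mathbb S^{n-1}$ (a standard fact from convex geometry), and the passages $r\mapsto q$ and then from a gauge to its unit ball are continuous for the respective uniform and Hausdorff topologies. The delicate part is uniformity near the boundary: one must keep the base points uniformly interior so that the radial functions stay bounded away from $0$ and $\infty$ and converge uniformly, which is where the closed and bounded hypotheses and a Blaschke-type compactness argument enter. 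This analytic bookkeeping, rather than any of the algebraic items, is the real work.
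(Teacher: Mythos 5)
The paper itself offers no proof of this proposition: it states that the properties ``are proved in \cite{PT3}'', a paper listed as in preparation, so there is nothing internal to compare your argument against. On its own terms, your reduction of items (1)--(4), (6) and (7) to the symmetrized gauge $q_{\Omega,x}=\frac12(p_{\Omega,x}(\cdot)+p_{\Omega,x}(-\cdot))$ is sound and is clearly the intended mechanism --- Proposition \ref{prop.minkharsym} is set up in the paper precisely so that these items become statements about sublevel sets of an even, convex, positively homogeneous, (lower semi)continuous function. The only points needing a word of justification are the lower semicontinuity of the gauge of a closed convex set when the base point lies on $\partial\Omega$ (the proposition allows $x\in\partial\Omega$), and, in item (4), the fact that an lsc positively homogeneous convex function is recovered as the gauge of its own unit sublevel set, so that equality of the sets forces $q_{\Omega,x}=p_{\Omega,x}$.

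There are, however, two genuine gaps. First, item (5) is a plan rather than a proof: you correctly identify that the equivalence between Hausdorff convergence and uniform convergence of radial functions requires the base points to stay uniformly interior and the bodies uniformly nondegenerate, but the proposition as stated permits $x_j\to x\in\partial\Omega$ and bodies collapsing to lower-dimensional sets, and this is exactly the regime you leave untreated. Second, your argument for item (8) breaks for a branch of a hyperboloid of two sheets. Writing $\Omega=\{G\le0\}$ already excludes that case (there $\{G\le0\}$ is the union of the two branches and is not convex), and the exclusion is essential: the telescoping identity behind $q_{\Omega,x}(\xi)=\sqrt{\xi^{\mathsf T}Q\xi}/(-G(x))$ uses that the two roots of $G(x+t\xi)=0$ have opposite signs, whereas for a hyperboloid branch in a direction with $\xi^{\mathsf T}A\xi<0$ both roots lie on the same side (the second one on the other sheet), the true section is a ray, and $p_{\Omega,x}(-\xi)=0$ rather than $1/|t_-|$. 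Concretely, for $\Omega=\{(u,v):v\ge\sqrt{u^2+1}\}$ and $x=(0,2)$, the boundary of $\mathcal H(\Omega,x)$ contains the points $x\pm(0,2)$, $x\pm(\sqrt3,0)$, $x\pm(3/2,3/2)$ and $x\pm(3/2,-3/2)$; the symmetry of this configuration forces any conic through it to be axis-aligned, and the axis-aligned conic through the first four points evaluates to $\frac{(3/2)^2}{3}+\frac{(3/2)^2}{4}=\frac{21}{16}\ne1$ at $(3/2,3/2)$. So $\mathcal H(\Omega,x)$ is not bounded by a conic here, and item (8) can only hold under a reading of ``bounded by a quadric'' that excludes such $\Omega$ (e.g.\ $\Omega=\{G\le 0\}$ with this set convex); your proof should state that restriction explicitly rather than claim the unbounded indefinite cases are ``tamed by the harmonic mean''.
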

 
 The harmonic symmetrization $\mathcal{H}({\Omega},x)$ is computable in a certain number of cases. For instance, one can give a formula for the harmonic symmetrization of a closed unit ball with respect to an arbitrary point. There also exist formulas for the harmonic symmetrization based on the notion of polar dual of a convex set. Details are given in \cite{PT3}.

 We now return to the question of symmetrization of a weak Finsler structure.

\section{Harmonic symmetrization of a weak Finsler structure}

A weak Finsler structure is a field of convex sets in the tangent bundle of a differentiable manifolds. Its 
harmonic symmetrization is naturally defined as the field of harmonic symmetrizations of each of these convex sets:

\begin{definition}[Harmonic symmetrization of a weak Finsler structure] Let  $M$ be a $C^1$ manifold equipped with a weak Finsler structure $\widetilde{\Omega}=\cup_{x\in M}\widetilde{\Omega}_x\subset TM$.
The \emph{harmonic symmetrization} of $\widetilde{\Omega}$  is the weak Finsler structure $\mathcal{H}(\widetilde{\Omega})\subset TM$ defined as 
\begin{equation}\label{eq:F}
\displaystyle \mathcal{H}(\widetilde{\Omega})=\cup_{x\in M}\mathcal{H}(\widetilde{\Omega}_x,0).
\end{equation}
\end{definition}
In other words, $\mathcal{H}(\widetilde{\Omega})$ is the Finsler structure obtained by taking in each
tangent space $T_xM$ the harmonic symmetrization of the convex set
$\widetilde{\Omega}_x$ with respect to the origin $0$ of $T_xM$.

Using Proposition \ref{prop.minkharsym}, we see that  the Lagrangian of $\mathcal{H}(\widetilde{\Omega})$ is given by
\begin{equation}\label{eq.lagrharm}
  q(x,\xi) =  \frac{1}{2}\left( p(x,\xi)+ p(x,-\xi)\right),
\end{equation}
where  $p=p_{\widetilde{\Omega}}$ is the the Lagrangian of $\widetilde{\Omega}$. If $\widetilde{\Omega}$ is open in $TM$, we then  have
\[
  \mathcal{H}(\widetilde{\Omega})=\{(x,\xi)\in TM \ \vert \  q(x,\xi) = \frac{1}{2}\left( p(x,\xi)+ p(x,-\xi)\right) <1\}.
\]

\begin{theorem}\label{th:1} Let $M$ be a $C^1$ manifold and let $\widetilde{\Omega}$ be a weak Finsler structure on $M$. Then, we have the following:
\begin{enumerate}

\item \label{l1} The arithmetic symmetrization of the length structure $\ell_{\widetilde{\Omega}}$ associated to $\widetilde{\Omega}$ is the length structure $\ell_{\mathcal{H}(\widetilde{\Omega})}$
associated to the harmonic symmetrization $ \mathcal{H}(\widetilde{\Omega})$ of $\widetilde{\Omega}$.

\item \label{l2} Suppose that for every $x$ and $y$ in $M$ there exists a bi-minimal path joining $x$ and $y$. Then, the distance associated to the harmonic symmetrization $\mathcal{H}(\widetilde{\Omega})$ is the arithmetic symmetrization of the distance $d_{\widetilde{\Omega}}$, that is:
 \[d_{\mathcal{H}(\widetilde{\Omega})}=\frac{1}{2}(d_{\widetilde{\Omega}}(x,y)+ d_{\widetilde{\Omega}}(y,x)).\]
\end{enumerate}
\end{theorem}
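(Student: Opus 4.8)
The plan is to prove the two parts in order, deducing the second from the first. Part (\ref{l1}) is an identity between two length structures on the same semigroupoid $\Gamma$ of piecewise $C^1$ paths, so it suffices to verify it for a single fixed $\gamma \in \Gamma$, say $\gamma : [a,b] \to M$. Writing $p = p_{\widetilde{\Omega}}$ for the Lagrangian of $\widetilde{\Omega}$ and $q$ for that of $\mathcal{H}(\widetilde{\Omega})$, I would start from the definition of the arithmetic symmetrization of a length structure, namely ${}^s\ell_{\widetilde{\Omega}}(\gamma) = \frac{1}{2}\bigl(\ell_{\widetilde{\Omega}}(\gamma) + \ell_{\widetilde{\Omega}}(\gamma^{-1})\bigr)$, and compute each term by means of Formula (\ref{Lebesgue}).

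The key computation concerns $\ell_{\widetilde{\Omega}}(\gamma^{-1})$. Parametrizing the reverse path by $\gamma^{-1}(t) = \gamma(a+b-t)$, one has $\dot{(\gamma^{-1})}(t) = -\dot\gamma(a+b-t)$; the sign flip in the velocity is the whole point. A change of variables $s = a+b-t$ then yields
\[ \ell_{\widetilde{\Omega}}(\gamma^{-1}) = \int_a^b p(\gamma(t), -\dot\gamma(t))\, dt. \]
Adding this to $\ell_{\widetilde{\Omega}}(\gamma) = \int_a^b p(\gamma(t), \dot\gamma(t))\, dt$ and dividing by two gives
\[ {}^s\ell_{\widetilde{\Omega}}(\gamma) = \int_a^b \frac{1}{2}\bigl(p(\gamma(t), \dot\gamma(t)) + p(\gamma(t), -\dot\gamma(t))\bigr)\, dt. \]
By Formula (\ref{eq.lagrharm}) the integrand is exactly $q(\gamma(t), \dot\gamma(t))$, so the right-hand side equals $\ell_{\mathcal{H}(\widetilde{\Omega})}(\gamma)$, which is Part (\ref{l1}). (The integrals are well-defined because $p$, and hence $q$, is Borel-measurable, as recalled after Formula (\ref{Lebesgue}).)

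For Part (\ref{l2}), I would simply combine Part (\ref{l1}) with Proposition \ref{prop:minimizing}. Since the distance attached to a weak Finsler structure is its associated weak metric, $d_{\widetilde{\Omega}} = \delta_{\ell_{\widetilde{\Omega}}}$ and $d_{\mathcal{H}(\widetilde{\Omega})} = \delta_{\ell_{\mathcal{H}(\widetilde{\Omega})}}$. Part (\ref{l1}) identifies $\ell_{\mathcal{H}(\widetilde{\Omega})}$ with ${}^s\ell_{\widetilde{\Omega}}$, whence $d_{\mathcal{H}(\widetilde{\Omega})} = \delta_{{}^s\ell_{\widetilde{\Omega}}}$. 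The hypothesis that every pair $x,y$ is joined by a bi-minimal path is precisely what Proposition \ref{prop:minimizing} requires in order to conclude that $\delta_{{}^s\ell_{\widetilde{\Omega}}}(x,y) = {}^s\delta_{\ell_{\widetilde{\Omega}}}(x,y) = \frac{1}{2}\bigl(d_{\widetilde{\Omega}}(x,y) + d_{\widetilde{\Omega}}(y,x)\bigr)$, which is the desired equality.

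The computation above is routine once set up, so the only real obstacle is the bookkeeping in the reversal: one must be careful that reversing a piecewise $C^1$ path flips the sign of the velocity (this is what produces the symmetrized Lagrangian $q$ rather than $p$ itself), and must check that the reparametrization invariance of $\ell$ legitimizes the chosen parametrization of $\gamma^{-1}$. For Part (\ref{l2}) the substance is entirely contained in Proposition \ref{prop:minimizing}; without the bi-minimality hypothesis one would obtain only the inequality of Lemma \ref{lem:eq}, and Example \ref{ex:ex} shows that the equality can genuinely fail.
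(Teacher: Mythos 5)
Your proposal is correct and follows essentially the same route as the paper: Part (\ref{l1}) by the pointwise identity (\ref{eq.lagrharm}) for the Lagrangians together with the change of variables for the reversed path, and Part (\ref{l2}) as a direct application of Proposition \ref{prop:minimizing}. The only cosmetic difference is that you compute ${}^s\ell_{\widetilde{\Omega}}(\gamma)$ and arrive at $\ell_{\mathcal{H}(\widetilde{\Omega})}(\gamma)$, while the paper runs the same chain of equalities in the opposite direction.
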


\begin{proof}
We first prove (\ref{l1}). Let $p$ be the Lagrangian of $\widetilde{\Omega}$ and let $q$ be the Lagrangian of $ \mathcal{H}(\widetilde{\Omega})$.  
Using Formula (\ref{eq.lagrharm}),  the length of an arbitrary piecewise $C^1$ path $\alpha:[0,1]\to M$
can be computed as follow:
\begin{eqnarray*}
\ell_{\mathcal{H}(\widetilde{\Omega})}(\alpha)&=& 
\int_0^1q(\alpha(t),\dot{\alpha}(t))dt\\
&=&\frac{1}{2}\left(\int_0^1p(\alpha(t),\dot{\alpha}(t))dt+
\int_0^1p(\alpha(t),-\dot{\alpha}(t))dt\right)\\
&=&\frac{1}{2}\left(\int_0^1p(\alpha(t),\dot{\alpha}(t))dt+\int_0^1p(\alpha(1-t),-\dot{\alpha}(1-t))dt\right)\\
&=&\frac{1}{2}\left(\ell_{\widetilde{\Omega}}(\alpha)+\ell_{\widetilde{\Omega}}(\alpha^{-1})\right)\}.
\end{eqnarray*}
This proves Property (\ref{l1}).

 Property  (\ref{l2}) follows from Proposition \ref{prop:minimizing}
 \end{proof}

\section{The tautological weak Finsler structure and the Funk weak metric}
\label{TWF}
 
In this section, $\Omega$ is an open
convex subset of $\mathbb{R}^n$. We shall use  the natural identification $T\Omega\simeq \Omega\times \mathbb{R}^n$. 

\begin{definition}[The tautological weak Finsler structure]  
The \emph{tautological weak Finsler structure} on $\Omega$  is the weak Finsler structure $\widetilde{\Omega}\subset T\Omega$ defined by
 $$
 \widetilde{\Omega}=\{(x,\xi)\in T\Omega \tq x\inÊ\Omega\text{ and } x+\xi\in\Omega\}.
 $$
\end{definition}
This structure is termed as  ``tautological" because the fiber over each point $x$ of $\Omega$ is the set $\Omega$ itself, with the origin at $x$.

The following is a consequence of the definitions, and it is proved in \cite{PT2}.
  
\begin{remark}\label{prop:s} Let $\Omega$ be an open
convex subset of $\mathbb{R}^n$ equipped with its tautological weak Finsler structure 
 $\widetilde{\Omega}$. Then, for every $x$ in $\Omega$, the Lagrangian of any tangent vector $\xi$ at $x$ is given by
$p_{\Omega,x}(\xi)$, where $p_{\Omega,x}$ is the Minkowski function of $\Omega$ with respect to $x$.
\end{remark}
 
Given an open convex subset $\Omega$ of $\mathbb{R}^n$, we denote by $d_{\Omega}$ the weak length metric associated to the tautological weak Finsler metric on $\Omega$, as defined in \S\ref{sec:weak}.
Recalling Definition \ref{def:Funk} of  the Funk weak metric, we have the following:

 \begin{theorem}\label{th.TotFunk}
Let $\Omega$ be an open convex subset of $\mathbb{R}^n$ equipped with its tautological weak Finsler structure. Then, for every $x$ and $y$ in $\Omega$, the Euclidean segment connecting $x$ and $y$ is of minimal length, and the weak metric on $\Omega$ associated to the tautological weak Finsler structure is the Funk weak metric:
$$d_{\Omega}(x,y) = F(x,y).$$
\end{theorem}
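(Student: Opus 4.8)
The plan is to prove the two inequalities $d_{\Omega}(x,y)\le F(x,y)$ and $d_{\Omega}(x,y)\ge F(x,y)$ separately: the first by computing the length of the Euclidean segment and finding it equals $F(x,y)$, the second by a calibration argument against a supporting half-space valid for \emph{every} competing path. Minimality of the segment then follows automatically, since it realizes the infimum defining $d_{\Omega}$.

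For the upper bound I would use Remark \ref{prop:s}, by which the Lagrangian of the tautological structure at a point $z\in\Omega$ is the Minkowski function $p_{\Omega,z}$. Parametrizing the segment by $\gamma(t)=(1-t)x+ty$, $t\in[0,1]$, gives a constant velocity $\dot\gamma(t)=y-x$. Writing the forward boundary point as $a^{+}=x+\lambda(y-x)$ with $\lambda=|a^{+}-x|/|y-x|>1$ (here $\lambda>1$ because $y\in\Omega$ while $a^{+}\in\partial\Omega$), a direct evaluation of the radial function of Definition \ref{def:radial} yields $r_{\Omega,\gamma(t)}(y-x)=\lambda-t$, hence $p_{\Omega,\gamma(t)}(y-x)=1/(\lambda-t)$. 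Integrating, $\ell_{\widetilde{\Omega}}(\gamma)=\int_0^1 dt/(\lambda-t)=\log\bigl(\lambda/(\lambda-1)\bigr)$, which equals $F(x,y)$ since $|x-a^{+}|=\lambda|y-x|$ and $|y-a^{+}|=(\lambda-1)|y-x|$. This already gives $d_{\Omega}(x,y)\le F(x,y)$. The degenerate cases are immediate: if $x=y$ both sides vanish, and if $R(x,y)\subset\Omega$ then $\lambda=\infty$, the integrand is identically $0$, and again both sides are $0$.

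The reverse inequality is the crux and the only genuinely nontrivial point; I would handle it by comparison with a half-space. By the supporting hyperplane theorem choose $H=\{z\tq \langle\nu,z\rangle\le s\}\supseteq\Omega$ whose bounding hyperplane supports $\Omega$ at $a^{+}$, so that $s=\langle\nu,a^{+}\rangle$. Since $\Omega$ is open and contained in $H$, in fact $\Omega\subseteq\interieur{H}$, and from $\Omega\subseteq H$ together with Definitions \ref{def:radial} and \ref{def:Minkowski} one gets the monotonicity $p_{\Omega,z}\ge p_{H,z}$ for all $z\in\Omega$. I would then introduce the potential $u(z)=-\log\bigl(s-\langle\nu,z\rangle\bigr)$, smooth on $\Omega$, and observe from the explicit half-space formula recorded in \S\ref{s:sym} that $p_{H,z}(\xi)=\max\bigl(du_{z}(\xi),0\bigr)\ge du_{z}(\xi)$. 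Combining these, $p_{\Omega,\gamma(t)}(\dot\gamma(t))\ge du_{\gamma(t)}(\dot\gamma(t))$ along any piecewise $C^1$ path $\gamma\in\Gamma_{x,y}$, so by the fundamental theorem of calculus $\ell_{\widetilde{\Omega}}(\gamma)\ge u(y)-u(x)$.

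It remains to identify this bound with $F(x,y)$. Using $a^{+}-x=\lambda(y-x)$, $a^{+}-y=(\lambda-1)(y-x)$ and $s=\langle\nu,a^{+}\rangle$,
\[
u(y)-u(x)=\log\frac{s-\langle\nu,x\rangle}{s-\langle\nu,y\rangle}=\log\frac{\langle\nu,a^{+}-x\rangle}{\langle\nu,a^{+}-y\rangle}=\log\frac{\lambda}{\lambda-1}=F(x,y),
\]
where $\langle\nu,y-x\rangle>0$ since $x\in\interieur{H}$ while $a^{+}\in\partial H$, so the quotients are well defined. Taking the infimum over $\gamma$ gives $d_{\Omega}(x,y)\ge F(x,y)$, and with the first step we conclude $d_{\Omega}(x,y)=F(x,y)$ and that the Euclidean segment is minimal. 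I expect the only delicate points to be guaranteeing the existence of a supporting hyperplane through $a^{+}$ with $\langle\nu,y-x\rangle>0$ (to avoid division by zero) and checking that the calibration survives the unbounded and degenerate configurations; everything else reduces to the two elementary integral computations above.
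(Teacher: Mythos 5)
Your proposal is correct and follows essentially the same route as the paper: the upper bound by integrating the Minkowski function along the affine segment, and the lower bound by comparison with a supporting half-space at $a^{+}$ together with the monotonicity $\Omega\subseteq H \Rightarrow p_{\Omega,z}\ge p_{H,z}$. The only difference is cosmetic: where the paper says the half-space case is ``explicitly computable'' and defers the verification to \cite{PT2}, you carry it out via the calibration $u(z)=-\log\bigl(s-\langle\nu,z\rangle\bigr)$, which is a clean way to make that step explicit.
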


\begin{proof}
We give a sketch of the proof. Details are contained in  \cite{PT2}. 
 Let us fix two points $x$ and $y$ in $\Omega$ and let $\gamma : [0,1] \to \Omega$ be the affine segment from $x$ to $y$. Recall that  $R(x,y)$ denotes the ray with origin $x$ and parallel to the vector $\xi = (y-x)$.

We first consider the case where $R(x,y) \subset \Omega$. In this case, we have $p_{\Omega,z}(\xi)=0$ for any point $z$ on the ray $R(x,y)$ , and therefore
$$
 d_{\Omega}(x,y) \leq \ell (\gamma) =\int_0^1 p_{\Omega,\gamma (t)} (\dot \gamma (t)) dt = 0.
$$
Thus, in this case, $d_{\Omega}(x,y) = F(x,y)=0$.
 
\medskip

We next assume that $R(x,y) \not\subset \Omega$ and set $a^+ = R(x,y) \cap \partial\Omega$. A direct computation shows in that case that 
$$
 \ell (\gamma) = \log\frac{|x-a^+|}{|y-a^+|} = F(x,y)
$$
(see \cite{PT2}). Therefore, $d_{\Omega}(x,y) \leq F(x,y)$.

 It remains to prove the converse inequality $d_{\Omega}(x,y) \geq F(x,y)$. This is done in two steps:
 
 $\bullet$ We first consider the case where $\Omega = U$ is a half space. In that case the Lagrangian is explicitly computable and one checks directly that any  (piecewise) $C^1$ curve in
$U$ joining $x$ to $y$ has length at most   $\log\frac{|x-a^+|}{|y-a^+|}$. This implies that $d_{U}(x,y) = F(x,y)$, in the case where $U$ is a half space.

 $\bullet$ We conclude by a monotonicity argument. It is easy to check that if $\Omega_1\subset \Omega_2$, then $d_{\Omega_1}(x,y)\geq d_{\Omega_2}(x,y)$.  Let us choose a half space 
 $U\subset \mathbb{R}^n$ bounded by a support hyperplane of $\Omega$ at the point $a^+$, i..e such that $a^+\in \partial U$ and $U\supset \Omega$. Then 
 $$
   d_{\Omega}(x,y)\geq d_{U}(x,y) = \log\frac{|x-a^+|}{|y-a^+|} = F(x,y).
 $$
\end{proof}

\medskip

Note that the triangle inequality for the Funk weak metric is now an obvious consequence of  Theorem \ref{th.TotFunk}.   
 
\section{The reversible tautological structure and the Hilbert metric}\label{s:t}

\begin{definition}[The reversible tautological weak Finsler structure]  Let $\Omega$ be an open
convex subset of $\mathbb{R}^n$. The \emph{reversible tautological weak Finsler structure} on $\Omega$  is the harmonic symmetrization of the tautological weak Finlser structure of $\Omega$.
\end{definition}

 In other words, the reversible tautological weak Finsler structure on $\Omega$ is the weak Finsler structure
 given by 
  \[\widetilde{\mathcal{H}(\Omega)}=\displaystyle \cup_{x\in \Omega}\widetilde{ \mathcal{H}(\Omega_x)}\]
where for each $x$  in $\Omega$, the set $\widetilde{ \mathcal{H}(\Omega_x)}\subset T_x\Omega$ is 
the harmonic symmetrization with respect to the origin of the convex open set $\widetilde{\Omega}_x$.

The use of the term ``reversible" will be justified in Theorem \ref{th:reversible} at the end of this section.

 \begin{proposition} Let $\Omega$ be an open
convex subset of $\mathbb{R}^n$ equipped with the reversible tautological weak Finlser structure. Then, the norm $q_{\Omega}(x,\xi)$ of each tangent vector $\xi$ to $\Omega$ at $x$ is given by the formula
 \[
 q_{\Omega}(x,\xi)= \frac{1}{2} (p_{\Omega}(x,\xi)+  p_{\Omega}(x,-\xi)).
 \]
 \end{proposition}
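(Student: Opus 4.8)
The plan is to unwind the definitions and invoke the two formulas already established for the Lagrangian of a harmonic symmetrization, so the argument reduces to a direct substitution. First I would recall that, by definition, the reversible tautological weak Finsler structure on $\Omega$ is precisely the harmonic symmetrization $\mathcal{H}(\widetilde{\Omega})$ of the tautological structure $\widetilde{\Omega}$, and that its Lagrangian is the function $q_{\Omega}$ appearing in the statement. By Remark \ref{prop:s}, the Lagrangian of the tautological structure itself is $p_{\Omega}(x,\xi)=p_{\Omega,x}(\xi)$, the Minkowski function of $\Omega$ with respect to $x$.

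Next I would apply the general formula (\ref{eq.lagrharm}), which was derived directly from Proposition \ref{prop.minkharsym}: for any weak Finsler structure with Lagrangian $p$, the Lagrangian of its harmonic symmetrization is $q(x,\xi)=\frac{1}{2}\left(p(x,\xi)+p(x,-\xi)\right)$. Substituting $p=p_{\Omega}$ immediately yields $q_{\Omega}(x,\xi)=\frac{1}{2}\left(p_{\Omega}(x,\xi)+p_{\Omega}(x,-\xi)\right)$, which is the claim.

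The only point requiring attention is the fiberwise nature of the construction: the harmonic symmetrization defining $\mathcal{H}(\widetilde{\Omega})$ is taken in each tangent space $T_x\Omega$ with respect to its origin $0$, not with respect to the point $x\in\Omega$. Under the identification $T_x\Omega\simeq\mathbb{R}^n$, the fiber $\widetilde{\Omega}_x$ is the translate $\Omega-x$, and the Minkowski function of $\Omega-x$ with respect to $0$ coincides with $p_{\Omega,x}$ by Proposition \ref{prop:min}(1); this is exactly the content of Remark \ref{prop:s}. Once this base-point bookkeeping is in place, Proposition \ref{prop.minkharsym} applied to the convex set $\widetilde{\Omega}_x$ at the origin gives the fiberwise identity at every $x$ and for every $\xi$.

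There is no genuine obstacle here: the proposition is a formal consequence of Proposition \ref{prop.minkharsym} (equivalently, of formula (\ref{eq.lagrharm})) together with the identification of the tautological Lagrangian with the Minkowski function. I would keep the written proof to a few lines, emphasizing only the distinction between symmetrization at the origin $0$ inside the fiber and symmetrization at the point $x$ inside $\Omega$, since it is the one spot where a careless reader might apply the wrong base point.
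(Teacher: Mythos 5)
Your proof is correct and follows essentially the same route as the paper, which simply cites formula (\ref{eq.lagrharm}) (itself a consequence of Proposition \ref{prop.minkharsym}) applied to the tautological structure. Your extra remark about the base-point bookkeeping in each fiber is a sensible clarification but does not change the argument.
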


\begin{proof}
This follows from equation (\ref{eq.lagrharm}).
\end{proof}
 
 We already recalled, in the introduction of this paper, the definition of Hilbert metric for a bounded convex domain. For a more general convex domain, the definition is somehow more cumbersome, and the idea is to extend the formula by continuity. More precisely, we give the following:

\begin{definition}[The Hilbert metric] 
Let $\Omega$ be an open
convex subset of $\mathbb{R}^n$. The \emph{ Hilbert metric} of $\Omega$  is the metric  on $\Omega$ denoted by $H_{\Omega}$ and defined, for $x$ and $y$ in $\Omega$, by the formula 
 \[\displaystyle H_{\Omega}(x,y)=
\begin{cases}
\displaystyle \frac{1}{2} \log \left( \frac{\vert x-a^+\vert}{\vert y-a^+\vert}  \frac{\vert x-a^-\vert}{\vert y-a^-\vert}  \right) & \  \text{ if } \ x\not= y,  R(x,y)\not\subset \Omega \ \text{ and } \ R(y,x)\not\subset \Omega\\
\displaystyle \log\frac{\vert x-a^+\vert}{\vert y-a^+\vert} = F(x,y)&\  \text{ if } \ x\not= y, 
R(x,y)\not\subset \Omega\ \text{ and }\  R(y,x)\subset \Omega\\
\displaystyle \log\frac{\vert x-a^-\vert}{\vert y-a^-\vert} = F(y,x)&\  \text{ if } \ x\not= y, 
R(x,y)\subset \Omega\ \text{ and } \ R(y,x)\not\subset \Omega\\
\displaystyle 0 &\  \mathrm{ otherwise}.
\end{cases}
\]
\end{definition}
 
Observe that the Hilbert metric of $\Omega$ is the arithmetic symmetrization of the Funk weak metric of $\Omega$,  namely, we have
\begin{equation}\label{form:Hilbert}
 \displaystyle H_{\Omega}(x,y)=\frac{1}{2}\left( F_{\Omega}(x,y)+ F_{\Omega}(y,x)\right).
 \end{equation}

  \begin{theorem}\label{th:reversible} Let $\Omega$ be an open convex subset of $\mathbb{R}^n$.  The distance function associated to the reversible tautological weak Finsler structure on $\Omega$  is the Hilbert distance.
Furthermore, the affine segments in $\Omega$ are minimal paths for the Hilbert metric.  
 \end{theorem}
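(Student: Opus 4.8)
The plan is to assemble the statement directly from results already in hand, the decisive point being to recognize the affine segments as \emph{bi-minimal} paths so that Theorem~\ref{th:1}(\ref{l2}) (equivalently Proposition~\ref{prop:minimizing}) applies. By definition the reversible tautological structure is $\mathcal{H}(\widetilde{\Omega})$, the harmonic symmetrization of the tautological structure $\widetilde{\Omega}$. Write $\ell=\ell_{\widetilde{\Omega}}$ for the length structure of $\widetilde{\Omega}$; by Theorem~\ref{th:1}(\ref{l1}) the length structure of $\mathcal{H}(\widetilde{\Omega})$ is the arithmetic symmetrization ${}^s\ell$, so the distance we must compute is $d_{\mathcal{H}(\widetilde{\Omega})}=\delta_{{}^s\ell}$. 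By Theorem~\ref{th.TotFunk} the weak metric $\delta_{\ell}=d_{\Omega}$ associated to $\ell$ is exactly the Funk weak metric $F$.

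First I would verify the bi-minimality hypothesis. Fix $x,y\in\Omega$ and let $\gamma:[0,1]\to\Omega$ be the affine segment from $x$ to $y$. Theorem~\ref{th.TotFunk} asserts that this segment is of minimal length, i.e. $\ell(\gamma)=d_{\Omega}(x,y)=F(x,y)=\delta_{\ell}(x,y)$. Its reverse $\gamma^{-1}$ is the affine segment from $y$ to $x$ (and in any case $\ell$ is reparametrization-invariant), so applying Theorem~\ref{th.TotFunk} to the ordered pair $(y,x)$ gives $\ell(\gamma^{-1})=F(y,x)=\delta_{\ell}(y,x)$. Since the class of piecewise $C^1$ paths is closed under reversal, $\gamma^{-1}\in\Gamma_{y,x}$, and therefore $\gamma$ is bi-minimal in the sense of the definition preceding Proposition~\ref{prop:minimizing}.

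With bi-minimality established, Proposition~\ref{prop:minimizing} yields $d_{\mathcal{H}(\widetilde{\Omega})}(x,y)=\delta_{{}^s\ell}(x,y)={}^s\delta_{\ell}(x,y)=\tfrac{1}{2}\bigl(F(x,y)+F(y,x)\bigr)$, which is precisely $H_{\Omega}(x,y)$ by Equation~(\ref{form:Hilbert}). For the minimality claim, the ${}^s\ell$-length of $\gamma$ is ${}^s\ell(\gamma)=\tfrac{1}{2}\bigl(\ell(\gamma)+\ell(\gamma^{-1})\bigr)=\tfrac{1}{2}\bigl(F(x,y)+F(y,x)\bigr)=H_{\Omega}(x,y)=d_{\mathcal{H}(\widetilde{\Omega})}(x,y)$, so $\gamma$ realizes the distance and is a minimal path for the Hilbert metric.

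I expect no serious obstacle, since everything reduces to the two already-proved theorems; the only point requiring care is the passage from ``minimal'' to ``bi-minimal'', where one must invoke Theorem~\ref{th.TotFunk} separately in both orders $(x,y)$ and $(y,x)$ and use that reversal preserves the length, so that the single affine segment minimizes simultaneously in both directions. I would also keep in mind the degenerate cases ($R(x,y)\subset\Omega$ or $R(y,x)\subset\Omega$) in which some Funk values vanish, but these are already handled uniformly by Theorem~\ref{th.TotFunk} and by the case split in the definition of $H_{\Omega}$, so the same argument goes through verbatim and no separate treatment is needed.
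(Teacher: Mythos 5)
Your proposal is correct and follows essentially the same route as the paper: identify the reversible structure's length as the arithmetic symmetrization via Theorem~\ref{th:1}(\ref{l1}), use Theorem~\ref{th.TotFunk} to identify $\delta_{\ell}$ with the Funk metric and to see that affine segments are bi-minimal, then conclude with Proposition~\ref{prop:minimizing} and Equation~(\ref{form:Hilbert}). Your explicit verification of bi-minimality (applying Theorem~\ref{th.TotFunk} in both orders) and the direct computation of ${}^s\ell(\gamma)$ for the minimality claim merely spell out steps the paper states more tersely.
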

 
\begin{proof} We use the fact that the Hilbert metric on $\Omega$ is the arithmetic symmetrization of the Funk weak metric of $\Omega$. By Item (1) in Theorem \ref{th:1}, the arithmetic symmetrization of the length structure associated to the tautological weak Finsler structure on $\Omega$ is the length function associated to the reversible tautological weak Finsler structure on $\Omega$. 
By Theorem \ref{th.TotFunk}, the weak metric associated to the tautological weak Finsler structure is the Funk weak metric $F_{\Omega}$. Theorem \ref{th.TotFunk} also says that the Euclidean paths in $\Omega$ are bi-minimal paths for the Funk weak metric. Using this fact, Proposition \ref{prop:minimizing} implies that the weak metric associated to the reversible tautological weak Finsler structure is the arithmetic symmetrization of the weak metric associated to the tautological weak Finsler structure, and this gives the desired result. 

The fact that  the affine segments  are minimal paths follows from the corresponding fact for the Funk weak metric
(see \cite{PT2}).
\end{proof}
 
As announced in the introduction, this directly shows that the Hilbert metric comes from a (weak)  Finsler structure, with no smoothness assumption. The triangle inequality is then a consequence of this fact and needs no ad-hoc proof. 
   
Finally, let us note that for convenience, we assumed throughout this paper that our convex sets are subsets of $\mathbb{R}^n$, but our results and their proofs are valid in any real affine finite- or infinite-dimensional Banach vector space.


\end{document}